\theoremstyle{plain}
\newtheorem{thm}{Theorem}
\newtheorem{qst}[thm]{Question}
\newtheorem{lm}[thm]{Lemma}
\newtheorem{prp}[thm]{Proposition}
\theoremstyle{definition}
\declaretheorem[sibling=thm,name=Example,qed={\twonotes}]{xmp}
\declaretheorem[sibling=thm,name=Remark,qed={\eighthnote \twonotes}]{rmk}
\title{Balanced player arrangement in rotating court games}
\author{Alessandro Danelon}
\address{Department of Mathematics, University of Michigan, Ann Arbor, MI}
\email{\href{mailto:adanelon@umich.edu}{adanelon@umich.edu}}
\urladdr{\url{https://public.websites.umich.edu/~adanelon/}}
\date{27 September 2025}
\begin{document}





\begin{abstract}
    Consider a game involving a team with $n$ players, $k$ of which wear shirts marked with a letter $A$, while the others with a letter $B$, and such that only $s$ people play, while the remaining $n-s$ wait outside the court.
    At certain times the players rotate, and one player enters the court while another one leaves, each player keeping the same neighbors at all times.
    The house rules want at least $t$ players wearing a shirt with the letter $A$ in the court at each rotation. 
    We show that this can be achieved if and only if $nt\leq ks$.
    We use classical work on balanced words dating back to Christoffel and Smith.
\end{abstract}
\maketitle
\section{Introduction}

This problem arose during a volleyball tournament in which a team was composed of $3$ women and $7$ men.
There are $6$ people playing in the court while the remaining $4$ players await next to it.
At each rotation one player enters the court while another one leaves, each player keeping the same neighbors at all times.
Moreover, the house rules want at least $2$ women in the court after each rotation.
\begin{qst}
    Can we find a placement of players guaranteeing the house rules are observed?
\end{qst}

After generalizing the above setting, we find a criterion showing that in this setting it is not possible to play according to the house rules.

\subsection{Generalization}
For positive integers $k,s,n$ such that $k,s < n$, an $(n, k, s)$-\textit{configuration} is given by:
\begin{enumerate}
    \item a circle $C$ with $n$ spots, $k$ of which are filled with the letter $A$ and $n-k$ with the letter $B$, and
    \item the collection $\Sigma$ of subsets of $s$ consecutive spots on the circle.
\end{enumerate}
In the volleyball setting above, $n$ is the total number of players, $k$ is the number of players wearing an $A$-shirt, and $s$ is the number of players in the court.
The set $\Sigma$ contains all the possible combinations (given by the rotations) of players in the court for a given order of the players.
A configuration $C$ is $t$-\textit{admissible} if any element of $\Sigma$ contains at least $t$ letters $A$.
In the case above, we have $t = 2$, as the house rules ask for at least $2$ players with an $A$-shirt in the court.
Note that we can equivalently restate the problem asking that in any subset of consecutive $n-s$ players, there are at most $k-t$ players wearing a shirt with a letter $A$.

\subsection{Outline}
We pose the following main question:
\begin{qst}
    Is there a criterion for $t$-admissibility of a $(n, k, s)$-configuration?
\end{qst}

We answer positively the above question: a $(n, k, s)$-configuration is $t$-admissible if and only if $nt \leq ks$.
We first note that our circular arrangement of $n$-players corresponds to infinite words on two letters of period $n$ (see Section~\ref{ssec:balancedwords} for definitions).
A \textit{mechanical word} of slope $k/n$ is a infinite word of period $n$ such that every subset of $m$ consecutive spots contains at least $\left \lfloor \frac{k}{n}\cdot m \right \rfloor$ and at most $\left \lceil\frac{k}{n}\cdot m \right \rceil$ letters $A$.
We use this fact to show the existence of a $t$-admissible configuration when $nt\leq ks$.
The fact that there are not $t$-admissible configurations when $nt>ks$ follows from a pigeonhole-type argument.
All the details are in Proposition~\ref{prp:main}.

\subsection{Connection to existing literature}
The problem can be restated in the setting of classical discrepancy theory.
Consider a ground set $X =\{x_1, \dots, x_n\}$ and the collection $\Sigma$ of subsets of the form $\{x_{i}, \dots, x_{i+m}\}$ where the indices are taken modulo $n$ when they exceed $n$.
Consider functions $\chi: X \to \{-1, 1\}$ representing the possible colorings of $X$ with two colors and, for a subset $S$ of $X$, define $\chi(S) = \sum_{x\in S}\chi(x)$.
The smaller this value is, the more balanced is the coloring $\chi$ of $S$.
The {\em discrepancy} of a coloring for $X$ with respect to $\Sigma$ is given by
\[
\operatorname{Disc}(X) = \min_{\chi}\max_{S\in \Sigma} |\chi(S)|.
\]
Our results finds a coloring $\chi$ guaranteeing that $\operatorname{Disc}(X) \leq m - 2 \left \lfloor \frac{m}{n/k} \right \rfloor$.
There are more general theorems working for any collection $\Sigma$.
The Beck–Fiala theorem for this setting implies that $\operatorname{Disc}(X)\leq 2m -1$, while the Spencer upperbound guarantees $\operatorname{Disc}(X) = O\left ( \sqrt{n \log(2)}\right)$ (\cite[Section~4.1]{discrepancy}).
Moreover, \cite{spencer:six} guarantees $6\sqrt{n}$.

Our solution revolves around the construction of mechanical words that are examples of Christoffel/Sturmian word.
Vast literature investigates this subject that has connections from number theory to theoretical computer science.
See, for example, \cite{sturmianwords,combinatoricsonwords, algcombonwords, appcombonwords, automaticseq, vuillon:balancedwords, wehlou}.
We also provide an algorithm outputting a mechanical word via the use of continued fractions.
We discovered at a later stage that the connection between continued fraction and balanced words was first made in \cite{smith:continuedfractions}.

\subsection{Acknowledgments}
Thanks to Andrew Snowden for useful discussions, to Zhe Su for wondering about a general approach to tackle this problem, and to the members of my volleyball team ``Kiss my Ace" for proposing this problem.

\section{Balanced words and proof}\label{ssec:balancedwords}
For this section we refer to \cite[Section~1.2.2 and Section~2.1.2]{algcombonwords} for definitions.
Let $\mathcal{A} = \{0, 1\}$ be the alphabet and let $\mathcal{A}^*$ be the set of (finite) words with entries in $\mathcal{A}$ together with the empty word $\epsilon$.
Let $x, y$ be finite words, the operation of concatenation $x*y$ turns $\mathcal{A}^*$ into a monoid.
Infinite words (one-sided) are elements of $\mathcal{A}^\mathbb{N}$.
An infinite word $w$ is \textit{periodic} of period $n$ if there exists a $z \in A^* \setminus\{\epsilon\}$ with $n$ letters such that $w = z*z*z*\cdots$.
Let $w(n)$ be the $n$-th letter of a word $w$, where $n\geq 0$ and $w(-1) = \epsilon$.
Let $w(n, n + k)$ be the \textit{factor} of length $k+1$ of $w$ starting at the letter $w(n)$ and ending at the letter $w(n + k)$.
The \textit{weight} $h(w)$ of a finite word $w$ is the number of $1$s in $w$.

Our main definition is the following: a \textit{mechanical word} $w$ of \textit{slope} $0 < \alpha \leq 1$ is an infinite word defined by $w(n) = \left \lceil \alpha (n+1) \right \rceil - \left \lceil \alpha  n \right \rceil$
for all $n \geq 0$.
\begin{lm}\label{lm:main}
    Let $w$ be a mechanical word of rational slope $\alpha = \frac{k}{n}$, then it is periodic, and for any factor $u$ of length $m$ of $w$ we have
    \[ 
    \lfloor m\alpha \rfloor \leq h(u) \leq \lceil m\alpha \rceil.
    \]
\end{lm}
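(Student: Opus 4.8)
The plan is to exploit the telescoping nature of the weight of a factor of a mechanical word, after which the lemma reduces to an elementary inequality about floors and ceilings.

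First I would record periodicity. Since $\alpha n = k \in \mathbb{Z}$, for every $j \geq 0$ we have
\[ w(j+n) = \lceil \alpha(j+n+1)\rceil - \lceil\alpha(j+n)\rceil = \bigl(\lceil\alpha(j+1)\rceil + k\bigr) - \bigl(\lceil\alpha j\rceil + k\bigr) = w(j), \]
so $w$ is periodic with period (dividing) $n$. The same computation with $n$ replaced by $1$, together with $0 < \alpha \le 1$, also shows $w(j) \in \{0,1\}$, so $w$ is genuinely a word over $\mathcal{A}$.

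Next, for the weight bound, write an arbitrary factor of length $m$ as $u = w(i, i+m-1)$ with $i \geq 0$. By the definition of a mechanical word,
\[ h(u) = \sum_{j=i}^{i+m-1} w(j) = \sum_{j=i}^{i+m-1}\bigl(\lceil\alpha(j+1)\rceil - \lceil\alpha j\rceil\bigr) = \lceil\alpha(i+m)\rceil - \lceil\alpha i\rceil, \]
the sum being telescoping. So it suffices to prove, for all reals $a,b$, the two-sided estimate $\lceil a\rceil + \lfloor b\rfloor \le \lceil a+b\rceil \le \lceil a\rceil + \lceil b\rceil$: the right inequality is subadditivity of the ceiling (from $a+b \le \lceil a\rceil + \lceil b\rceil$, whose right side is an integer), and the left follows from $\lceil a+b\rceil - \lfloor b\rfloor = \lceil a + (b-\lfloor b\rfloor)\rceil \ge \lceil a\rceil$ since $b-\lfloor b\rfloor \ge 0$. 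Taking $a = \alpha i$ and $b = \alpha m$ and rearranging then yields exactly $\lfloor m\alpha\rfloor \le h(u) \le \lceil m\alpha\rceil$.

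I do not expect a genuine obstacle here. The only points needing care are getting the index range of the telescoping sum right — a length-$m$ factor starting at $w(i)$ consists of $w(i),\dots,w(i+m-1)$, hence the surviving endpoints $\alpha(i+m)$ and $\alpha i$ — and orienting the floor/ceiling inequality correctly. One could instead argue by induction on $m$, since passing from $m$ to $m+1$ changes $h(u)$ by $w(i+m)\in\{0,1\}$ while each of $\lfloor m\alpha\rfloor$ and $\lceil m\alpha\rceil$ changes by $0$ or $1$, but the direct telescoping argument is cleaner and avoids the bookkeeping.
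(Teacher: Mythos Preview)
Your proof is correct and essentially identical to the paper's: both telescope to $h(u)=\lceil\alpha(i+m)\rceil-\lceil\alpha i\rceil$ and then squeeze this between $\lfloor m\alpha\rfloor$ and $\lceil m\alpha\rceil$. The only cosmetic difference is that the paper passes through the cruder estimate $\alpha m-1<h(u)<\alpha m+1$ before concluding, whereas you invoke the floor/ceiling subadditivity inequality directly.
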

\begin{proof}
    This is \cite[Lemma~2.1.14]{algcombonwords}.
    By definition it is periodic of period $n$.
    Suppose that $u = w(n_1, n_2) = w(n_1, n_1 + m - 1)$, then $h(u) = h(w(0, n_2)) - h(w(0, n_1-1)) = \lceil \alpha (n_1 + m)\rceil - \lceil \alpha n_1\rceil$.
    Then, $\alpha m - 1 < h(u) < \alpha m + 1$, hence $\lfloor m\alpha \rfloor \leq h(u) \leq \lceil m\alpha \rceil$.    
\end{proof}

The following is the proof of the admissibility criterion.

\begin{prp}\label{prp:main}
    There exists a $t$-admissible configuration if and only if $nt \leq ks$.
\end{prp}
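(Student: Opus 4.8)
The plan is to prove the two directions separately, with the forward (existence) direction powered by Lemma~\ref{lm:main} and the reverse (impossibility) direction by a pigeonhole/averaging argument.

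For the easy direction, suppose $nt > ks$; I would show no $t$-admissible configuration exists. Fix any configuration, i.e. any circular word of period $n$ with exactly $k$ letters $A$. The $n$ windows of $s$ consecutive spots (one starting at each spot) together cover each of the $k$ copies of $A$ exactly $s$ times, so the total count of $A$'s over all $n$ windows is $ks$. If every window contained at least $t$ copies of $A$, this total would be at least $nt > ks$, a contradiction. Hence some window has at most $t-1$ copies of $A$, so the configuration is not $t$-admissible. This gives: $t$-admissible $\implies nt \le ks$.

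For the converse, assume $nt \le ks$; I want to exhibit a $t$-admissible configuration. Here I would take $w$ to be the mechanical word of slope $\alpha = k/n$. By Lemma~\ref{lm:main}, $w$ is periodic of period $n$, so its first $n$ letters, read around a circle, form a legitimate $(n,k,s)$-configuration — one should check the weight of the length-$n$ period is exactly $k$, which follows from $h(w(0,n-1)) = \lceil \alpha n\rceil - \lceil 0 \rceil = k$. Now any element of $\Sigma$ is a window of $s$ consecutive spots, which corresponds to a factor $u$ of $w$ of length $s$ (using periodicity to wrap around). Lemma~\ref{lm:main} gives $h(u) \ge \lfloor s\alpha\rfloor = \lfloor ks/n\rfloor$. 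Since $nt \le ks$ means $t \le ks/n$ and $t$ is an integer, $t \le \lfloor ks/n\rfloor \le h(u)$. So every window contains at least $t$ letters $A$, i.e. the configuration is $t$-admissible.

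I do not expect any serious obstacle here: both directions are short once Lemma~\ref{lm:main} is in hand. The only points requiring a little care are (i) confirming that the period of the mechanical word really has weight exactly $k$ (so that it is a bona fide $(n,k,s)$-configuration and not one with the wrong number of $A$'s), and (ii) the translation between "subset of $s$ consecutive spots on the circle" and "factor of length $s$ of the periodic word $w$," where one must invoke periodicity of $w$ to handle windows that wrap past the $n$-th spot. Both are routine. One might also remark that the displayed discrepancy bound in the introduction, $\operatorname{Disc}(X) \le m - 2\lfloor m/(n/k)\rfloor$, is essentially a restatement of the factor-weight bound from Lemma~\ref{lm:main} applied to the same mechanical word.
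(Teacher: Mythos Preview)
Your proposal is correct and follows essentially the same approach as the paper: the existence direction via the mechanical word of slope $k/n$ together with Lemma~\ref{lm:main} is identical, and your averaging argument for non-existence is a cleaner phrasing of the paper's pigeonhole on the $k\times s$ matrix of rotation distances (both amount to observing that the $n$ cyclic windows carry a total of $ks$ letters $A$). Your extra care in checking that the period really has weight $k$ is a detail the paper leaves implicit.
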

\begin{proof}
Suppose $ks < nt$, and let $C$ be a configuration.
Order the letters $A$ clockwise. We can start at any $A$.
Let $O$ be an element of $\Sigma$ and let $a_{i,j}$ be the rotation distance (how many rotations clockwise) between the $i$-th $A$ and the $j$-th elements of $O$.
The $a_{i,j}$ take values between $0$ and $n-1$.
Note that the $i$-th $A$ belongs to $O$ if and only if $a_{i,j} = 0$ for some $j =1, \dots, s$.
The integers $a_{i,j}$ form a $k\times s$-matrix where the $i$-th row is the vector $(a_{i,1}, \dots, a_{i,s})$ with no repeated values, and one can read off how many letters $A$ belong to $O$ by counting the zeroes of this matrix.
By the pigeonhole principle there must be a value $a$ between $0$ and $n-1$ that is attained at most $ks/n < t$ times.
Then, the subset $O'$ obtained by the rotation of $O$ by $a$ spots shows that $C$ is not admissible.

Viceversa, assume that $ks \geq nt$, construct a (infinite) mechanical word of slope $\frac{k}{n}$, and substitute the letter $A$ for the number $1$ and the letter $B$ for the number $0$.
Since $k/n$ is rational, the infinite mechanical word is equivalent to a circular arrangement of $n$ letters.
Consider any set $O$ of $s$ consecutive spots.
By Lemma~\ref{lm:main}, $O$ contains at least $\left \lfloor \frac{ks}{n}\right \rfloor$ letters $A$.
Our condition implies that $t \leq \frac{ks}{n}$, and, since it is an integer $t \leq \left \lfloor \frac{ks}{n}\right \rfloor$, so $O$ contains at least $t$ letters $A$.
\end{proof}


\section{Equivalent algorithms}
In this section we describe a way to arrange $k$ letters $A$ and $(n-k)$ letters $B$ on a circle so that any section of $m$ consecutive players contains roughly the same number of $A$ and $B$.
This algorithm was our original solution to the problem using the continued fraction expansion of $\frac{n}{k}$.
At a later stage we found a connection with the algorithm of \cite{smith:continuedfractions}, and hence with mechanical words.
We show how the two algorithms are equivalent and how they output the same mechanical word.
We think it is still worth mentioning it here as it is another way for assigning spots to each player on the fields.

\subsection{Description of the algorithm}\label{ssec:myalgo}

Given integers $n$ and $k$, we want to dispose the players on a circle in such a way that any two consecutive letters $A$ are the furthest apart from each other.
In order to do so, we follow the steps below.
\begin{enumerate}
    \item Perform the Euclidean algorithm on $n$ and $k$.
    We use the following notation:
    \[
    r_{j-2} = q_j r_{j-1} + r_j,
    \]
    with $r_j < r_{j-1}$, $r_{-3} = n$ and $r_{-2} = k$.
    We end at the step $i$ when $r_{i+1} = 0$, so we get:
\begin{align*}
    n = \; & \; r_{-3} = q_{-1}r_{-2} + r_{-1}\\
k = \;& \; r_{-2} = q_0 r_{-1} + r_0\\
& \; r_{-1} = q_1 r_0 + r_1\\
&\; \; \; \; \; \; \; \; \,\vdots \\
&r_{i-3} = q_{i-1} r_{i-2} + r_{i-1}\\
&r_{i-2} = q_i r_{i-1} + r_i\\
&r_{i-1} = q_{i+1}r_i.\\
\end{align*}
    \item The minimal value for the index $i$ is $-2$.
    In this case $k$ divides $n$ and we construct a configuration with one letter $A$ each $(q_{-1}-1)$ consecutive letters $B$.
    \item For $i > -2$ there is a nonzero remainder of the division of $n$ by $k$.
    Before placing the letters, we create a sequence with symbols $-$ and $+$.
    We write the sequence clock-wise on a circle as follows.
    \begin{enumerate}
        \item Put $r_i$ symbols $+$, and add $q_{i+1}-1$ symbols $-$ between two consecutive $+$.
        We have now a sequence with $r_{i-1}$ symbols, $r_i$ of which are $+$.
        \item Add a symbol $-$ after each of the $r_i$ symbols $+$.
        Turn the previous $r_{i-1}-r_i$ symbols $-$ into symbols $+$.
        We now have a sequence with $r_i + r_{i-1}$ symbols: and $r_{i-1}$ are symbols $+$ and $r_i$ are symbols $-$.
        \item Between two consecutive symbols $+$ in the sequence, add $q_i - 1$ symbols $-$.
        The sequence now has $r_{i-2}$ symbols.
        
        \item Iterate steps $(b)$ and $(c)$ above until reaching $r_{-2}$ total symbols, $r_{-1}$ of which are $+$.
    \end{enumerate}
    \item At this point we insert the letters: after each $+$ we add a letter $B$, we turn all the $+$ and $-$ into letters $A$ (we have $r_{-2} = k$ of them), and we add $(q_{-1}-1)$ letters $B$ in between two consecutive letters $A$.
\end{enumerate}

At the end of the algorithm, we filled all the $n$-spots of the circle.

\begin{xmp}
    Below we run the algorithm for $n = 23$ and $k = 10$.
    \begin{enumerate}
        \item Compute $23 = 2\cdot 10 + 3$, $10 = 3 \cdot 3 + 1,$ and $3 = 3\cdot 1 + 0$.
        \item False: $i = 0$.
        \item True.
        \begin{enumerate}
            \item Output $[+, -, -]$.
            \item Output $[+, -, +, +]$
            \item Output $[+, -, -, -, +, -, -, +, -, -]$.
            \item Done.
        \end{enumerate}
        \item Output $[ABB AB AB AB ABB AB AB ABB AB AB]$.
    \end{enumerate}
\end{xmp}

\begin{xmp}
    Below there is another example with one more iteration of the above algorithm.
    Suppose we have the recurrences:
    \begin{enumerate}
        \item $ r_{-3} = n = 87 = 2*36 + 15 = q_{-1}r_{-2} + r_{-1}$
        \item $ r_{-2} = k = 36 = 2*15 + 6 = q_{0}r_{-1} + r_{0}$
        \item $ r_{-1} = 15 = 2*6 + 3 = q_{1}r_{0} + r_1$
        \item $ r_{0} = 6 = 2*3 = q_{2}r_1$
    \end{enumerate}
    Then the algorithm performs the following steps (the symbols $``|''$ are for (the author's) visual help):
    \begin{etaremune}
        \item $[+,-|+,- | +, -]$
        \item $[+,-, -, +, - |+, -, -, +, - | +, -, -, +, -]$
        \item $[+, -, -, +, -, +, -, +, -, -, +, - |\\ +, -, -, +, -, +, -, +, -, -, +, - |\\ +, -, -, +, -, +, -, +, -, -, +, -]$
        \item $[ ABBABAB \text{ } ABBAB \text{ } ABBAB \text{ } ABBABAB \text{ } ABBAB | \\
        ABBABAB \text{ } ABBAB \text{ } ABBAB \text{ } ABBABAB \text{ } ABBAB |\\
        ABBABAB \text{ } ABBAB \text{ } ABBAB \text{ } ABBABAB \text{ } ABBAB]$
    \end{etaremune}
\end{xmp}

\begin{rmk}
    \sloppy
    If $n$ and $k$ are coprime (so that $r_i = 1$), let the sequence $[q_{-1}, q_0, q_1, \dots, q_{i+1}]$ be the continued fractions quotients of the division $n$ by $k$.
    The recurrence sequence $a_j = q_{j}a_{j-1} + a_{j-2}$ with $a_{i-1} = 0$, $a_{i} = 1$, and starting from $j = i + 1$ retrieves the steps in the Euclidean algorithm bottom-up ($a_{-1} = n$ and $a_0 = k$). 
\end{rmk}

\subsection{Smith's construction}

For this section we refer to \cite{smith:continuedfractions}, whose original motivation was to order the points $P = 1/p, 2P = 2/p, \dots, pP = p/p$, and $Q = 1/q, 2Q = 2/q, \dots, qQ = q/q$ on $[0,1]$ for two coprime integers $p,q$.
In our setting, that construction runs as follows.
Suppose $p>q$, and let $[\mu_1, \dots, \mu_t]$ be their continued fraction expansion, namely:
\[
p/q = \mu_1 + \cfrac{1}{\mu_2 + \cfrac{1}{\cdots + \cfrac{1}{\mu_{k-1}+\cfrac{1}{\mu_{t}}}}}.
\]
Construct the following recursive sequence:

\begin{enumerate}
    \item $S_1 = B^{\mu_1}A$,
    \item $S_2 = S_1^{\mu_2}B$,
    \item $S_3 = S_2^{\mu_3}S_1$,
    \item $\cdots$
    \item $S_t = S_{t-1}^{\mu_t}S_{t-2}$,
\end{enumerate}

Arranging the above ordered letters on a circle gives a circular arrangement of players.

\begin{prp}
    Let $n > k $ be two coprime integers with continued fraction expansion $[\mu_1, \dots, \mu_t]$.
    Then the Smith's algorithm on the continued fraction expansion $[\mu_1-1, \dots, \mu_t]$ and the algorithm of Section~\ref{ssec:myalgo} on $n$ and $k$ output the same arrangement (up to rotation).
\end{prp}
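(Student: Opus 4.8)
The plan is to rewrite \emph{both} algorithms as one and the same continued-fraction-driven iterated substitution, after which the coincidence of the two cyclic words is essentially formal. Throughout I use that, $n$ and $k$ being coprime, the Euclidean algorithm of Section~\ref{ssec:myalgo} ends with $r_i=1$; that the continued fraction expansion is $[\mu_1,\dots,\mu_t]=[q_{-1},q_0,q_1,\dots,q_{i+1}]$, so $t=i+3$ and $\mu_\ell=q_{\ell-2}$; and that strict monotonicity of the remainders forces $q_{i+1}=r_{i-1}\ge 2$. For $q\ge 1$ let $\sigma_q$ be the substitution of $\{+,-\}^{*}$ sending $+$ to a $+$ followed by $q$ copies of $-$, and $-$ to a $+$ followed by $q-1$ copies of $-$; after relabelling $+\mapsto A$, $-\mapsto B$ this becomes the substitution $\rho_q$ of $\{A,B\}^{*}$ with $\rho_q(A)=AB^{q}$ and $\rho_q(B)=AB^{q-1}$.

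First I would show that the algorithm of Section~\ref{ssec:myalgo}, read around a circle, is exactly $\rho_{\mu_1}\circ\rho_{\mu_2}\circ\cdots\circ\rho_{\mu_t}$ evaluated at the one-letter word $B$, with the \emph{innermost} map $\rho_{\mu_t}$ applied first. This rests on three local observations: step~$(a)$ produces $\sigma_{q_{i+1}}(-)$, namely a $+$ followed by $q_{i+1}-1$ copies of $-$ (here $r_i=1$ is used); one full pass of steps~$(b)$ and $(c)$ with Euclidean quotient $q$ is exactly $\sigma_q$, because step~$(b)$ replaces each $+$ by ``$+\,-$'' and each old $-$ by ``$+$'', so that afterwards the gap immediately to the right of a $+$ inherited from an old $+$ holds one $-$ and the gap to the right of a $+$ inherited from an old $-$ holds none, and step~$(c)$ then tops these up to $q$ and to $q-1$ copies of $-$; and step~$4$ is the map $R\circ\sigma_{q_{-1}}$, since putting a $B$ after each $+$ and then $q_{-1}-1$ further $B$'s between consecutive $A$'s turns $+$ into $AB^{q_{-1}}$ and $-$ into $AB^{q_{-1}-1}$. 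Composing step~$(a)$, then the passes of $(b)$--$(c)$ for the quotients $q_i,q_{i-1},\dots,q_0$, then step~$4$, and using $q_{i+1}=\mu_t,\dots,q_{-1}=\mu_1$, yields the asserted form.

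Next I would handle Smith's side. Feeding $[\mu_1-1,\mu_2,\dots,\mu_t]$ into Smith's recursion is the classical \emph{standard word} recursion $S_{-1}=A$, $S_0=B$, $S_\ell=S_{\ell-1}^{\nu_\ell}S_{\ell-2}$ with $\nu_1=\mu_1-1$ and $\nu_\ell=\mu_\ell$ for $\ell\ge 2$; note $S_1=B^{\mu_1-1}A$ is the cyclic rotation of $AB^{\mu_1-1}=\rho_{\mu_1}(B)$, which is exactly the bookkeeping behind the shift $\mu_1\mapsto\mu_1-1$ in the statement. It then remains to identify $S_t$ with $\rho_{\mu_1}\circ\cdots\circ\rho_{\mu_t}(B)$ as circular words. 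I would do this by induction on $\ell$: writing $T_\ell:=\rho_{\mu_1}\circ\cdots\circ\rho_{\mu_\ell}(B)$ and $T_0:=B$, the identities $\rho_{\mu_\ell}(B)=AB^{\mu_\ell-1}$ and $\rho_{\mu_\ell}(A)=AB^{\mu_\ell}$ give $\rho_{\mu_1}\circ\cdots\circ\rho_{\mu_\ell}(A)=T_\ell T_{\ell-1}$ and, for $\ell\ge2$, $T_\ell=T_{\ell-1}T_{\ell-2}T_{\ell-1}^{\mu_\ell-1}$, so $T_\ell$ is the rotation of $T_{\ell-1}^{\mu_\ell}T_{\ell-2}$ by the prefix $T_{\ell-1}T_{\ell-2}$; since $S_\ell=S_{\ell-1}^{\mu_\ell}S_{\ell-2}$ has the same recursive shape, an induction tracking these rotations gives $T_\ell\sim S_\ell$, whence $T_t\sim S_t$. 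Alternatively I could invoke the classical equivalence between the continuant recursion $S_\ell=S_{\ell-1}^{d_\ell}S_{\ell-2}$ and the morphic description of a Christoffel/standard word (see \cite[Section~2.1.2]{algcombonwords} or \cite{combinatoricsonwords}), which is the word-level avatar of the reversibility recorded in the Remark of Section~\ref{ssec:myalgo}.

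I expect the induction in the previous paragraph to be the only real obstacle. What makes it delicate is that Smith's recursion consumes the partial quotients outward, starting from $\mu_1$, whereas the composition extracted from Section~\ref{ssec:myalgo} consumes them inward, starting from $\mu_t$; reconciling the two orders and carrying the exact rotations through the recursion is where the content lies, the translation of the steps of the two algorithms into substitutions being routine. Finally I would treat the degenerate inputs separately: if $k\mid n$ (hence $k=1$, the case $i=-2$) both algorithms place one $A$ followed by $q_{-1}-1$ copies of $B$ and repeat it around the circle, and if $n<2k$ then $\mu_1=1$, so $\nu_1=0$ and $S_1=A$, matching $\rho_{\mu_1}=\rho_1$ with $\rho_1(A)=AB$ and $\rho_1(B)=A$.
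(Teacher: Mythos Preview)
Your proposal is correct and reaches the same conclusion, but it is organised differently from the paper's own argument. You package each pass of the algorithm as an explicit substitution $\sigma_q$ (resp.\ $\rho_q$), pairing step~$(b)$ with the following step~$(c)$, and thereby read the whole algorithm as the morphic word $\rho_{\mu_1}\circ\cdots\circ\rho_{\mu_t}(B)$; you must then identify this with Smith's $S_t$, and since Smith's recursion consumes the quotients from $\mu_1$ outward while your composition consumes them from $\mu_t$ inward, you rightly isolate that reconciliation as the real work and offer the classical standard/Christoffel-word equivalence as a shortcut. The paper sidesteps this order clash by running Smith \emph{top down}: it writes $S_t=S_{t-1}^{\mu_t}S_{t-2}$, identifies $S_{t-1}\leftrightarrow +$ and $S_{t-2}\leftrightarrow -$ (matching the algorithm after steps $(a)$ and $(b)$), and observes that each rewriting $S_{j}\mapsto S_{j-1}^{\mu_{j}}S_{j-2}$ is exactly one pass of steps $(c)$+$(b)$, so both sides consume $\mu_t,\mu_{t-1},\dots,\mu_1$ in the same order and only a fixed two-step cyclic shift needs tracking. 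Your substitution formalism is cleaner and makes the connection to Christoffel words explicit; the paper's top-down unfolding is more direct for this particular matching and avoids precisely the induction you flag as delicate.
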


\begin{proof}
    We run the Smith's sequence bottom up and find the correspondence with the algorithm of Section~\ref{ssec:myalgo}.
    For this proof we refer to the latter simply as the algorithm.
    Recall that $n$ and $k$ are coprime, and note that $\mu_t = q_{i+1}, \mu_{t-1} = q_i,$ and so on.
    The first step of Smith's algorithm correspond to run the algorithm up to  step $3(b)$.
    The $S_{t-1}$ correspond to the symbols $+$ (so there are ${\mu_t}$ of them) and $S_{t-2}$ is the symbol $-$.
    Notice that the above configuration is shifted with respect to the algorithm by a rotation of two steps clockwise:
    \[
     S_{t-1}
    \cdots S_{t-1} S_{t-2} \sim (+ \cdots + - ) \sim (+-+\cdots +),
    \]
    where we use the symbol $\sim$ for the equivalence between the expressions and the round-brackets single out the output of the algorithm.
    
    Rewriting $S_{t-1}$ in terms of $S_{t-2}$ and $S_{t-3}$ corresponds to perform steps $3(c)$ and $3(b)$ in the second iteration.
    Indeed, the algorithm substitutes for each $+$ a sequence $+-+\cdots+$ where the tail has $\mu_{t-1}-1$ symbols $+$, while the symbol $-$ becomes a $+$.
    Rewriting $S_{t-2}$ for the symbols $+$ and $S_{t-3}$ for the symbols $-$ shows that one gets the same output of Smith's construction (again up to a rotation by two steps counterclockwise).
    At the end, we have a sequence of $S_2$ and $S_1$, where each $S_2$ corresponds to a $+$ and each $S_1$ to a $-$.
    Finally, rewriting $S_2$ corresponds to the step $3(c)$ and the first part of step $(4)$: we get to  $+^{\mu_2} B$.
    Rewriting  $S_1$ correspond to exchange each $+$ for $B^{\mu_1-1}A$, as in the last step of the algorithm.    
\end{proof}

\subsection{Equivalence with mechanical words}

\begin{prp}\cite{smith:continuedfractions}\cite[Theorem~7.2]{combinatoricsonwords}
    Let $S_t$ be the expression of the Smith procedure on $n$ and $k$ coprime.
    The expression $(A*S_{t}(0, n-2)*B)^{*\mathbb{N}}$ is a mechanical word of slope $k/n$.
\end{prp}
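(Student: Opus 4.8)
The plan is to show that the period block $u := A*S_t(0,n-2)*B$ is the upper Christoffel word of slope $k/n$, and then to observe that the infinite power of that word is precisely the mechanical word defined in Section~\ref{ssec:balancedwords}. Throughout I will take $S_t$ to be the Smith word of length $n$, i.e.\ the one produced — as in the preceding proposition relating Smith's construction to the algorithm of Section~\ref{ssec:myalgo} — from the normalized directive sequence $[\mu_1-1,\mu_2,\dots,\mu_t]$, which is the continued fraction of $(n-k)/k$. With this normalization Smith's recursion $S_j=S_{j-1}^{\mu_j}S_{j-2}$, with $S_1=B^{\mu_1-1}A$ and $S_2=S_1^{\mu_2}B$, is exactly the standard-sequence recursion of \cite[Section~2.1.2]{algcombonwords} with base $A,B$.

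The first step is pure bookkeeping with continued fractions. An easy induction on $j$ shows that $|S_j|$ and the weight $h(S_j)$ satisfy the linear recurrences of, respectively, $p_j+q_j$ and $q_j$, where $p_j/q_j$ denotes the $j$-th convergent of $(n-k)/k$; since that fraction is already reduced, one gets $|S_t|=(n-k)+k=n$ and $h(S_t)=k$. A second induction — using that the standard recursion turns a word of the form ``palindrome followed by two letters'' into another such word — shows $S_t=p\,xy$ with $p$ a palindrome of length $n-2$ (the central word) and $xy\in\{AB,BA\}$, a parity count identifying $xy=BA$. Hence $S_t(0,n-2)=p$, and $u=A*p*B$ is a word of length $n$ with exactly $k$ letters $A$.

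The second step reduces the claim to a prefix–weight identity. For $\alpha=k/n$ the mechanical word $m$ satisfies $h\bigl(m(0,j-1)\bigr)=\sum_{i=0}^{j-1}\bigl(\lceil\alpha(i+1)\rceil-\lceil\alpha i\rceil\bigr)=\lceil j\alpha\rceil$ for every $j\ge0$, so its period block $m(0,n-1)$ is the unique binary word of length $n$ whose length-$j$ prefix has weight $\lceil jk/n\rceil$ for $0\le j\le n$. Since a binary word is determined by its prefix-weight function, and — because $h(u)=k$ — it suffices (by periodicity) to match the first $n$ prefixes, the goal becomes: show that the length-$j$ prefix of $u=A*p*B$ has weight $\lceil jk/n\rceil$ for all $0\le j\le n$. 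One can check this directly on a small example such as $n=23$, $k=10$.

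The third step is this prefix–weight identity, which is precisely the classical continued-fraction description of Christoffel/Sturmian words of Smith and of \cite[Theorem~7.2]{combinatoricsonwords} — the very result being cited — and can simply be quoted once the notations are matched. If instead a self-contained argument is wanted, I would induct along the standard sequence, proving at stage $j$ that $S_j$ is balanced and that its length-$\ell$ prefixes realize the extremal weight dictated by the $j$-th convergent slope; the inductive step combines $S_j=S_{j-1}^{\mu_j}S_{j-2}$ with the convergent recurrences $p_j=\mu_jp_{j-1}+p_{j-2}$, $q_j=\mu_jq_{j-1}+q_{j-2}$ and the unimodularity relation $p_{j-1}q_{j-2}-p_{j-2}q_{j-1}=\pm1$. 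Once the identity holds at stage $t$, $u=m(0,n-1)$, and therefore $(A*S_t(0,n-2)*B)^{*\mathbb{N}}=(u)^{*\mathbb{N}}=m$, which is the assertion. I expect the real difficulty to be not any single step but the reconciliation of conventions: matching the ceiling-based (``upper'') mechanical word of Section~\ref{ssec:balancedwords} with the lower/upper Christoffel dichotomy, deciding which of $AB$, $BA$ terminates $S_t$ (hence which conjugate of the Christoffel necklace one lands on), and squaring the truncation $S_t(0,n-2)$ — which under the length convention of Section~\ref{ssec:balancedwords} names a length-$(n-1)$ factor, whereas the central word has length $n-2$ — with the normalization $[\mu_1-1,\dots,\mu_t]$ inherited from the preceding proposition.
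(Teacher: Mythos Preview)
Your approach is, at its core, the same as the paper's: the paper's entire proof is the single sentence ``Use \cite[Lemma~7.2]{combinatoricsonwords},'' i.e.\ it simply quotes the cited result without any of the convention-matching you carry out. Everything you add --- identifying $S_t$ with a standard word, extracting the central palindrome, reducing to the prefix-weight identity --- is scaffolding the paper omits, and in Step~3 you yourself note that the identity ``is precisely the classical continued-fraction description \dots\ the very result being cited --- and can simply be quoted.'' So there is no genuine methodological difference: both proofs are the citation, yours with the notational reconciliation spelled out.

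Your closing caveat is well taken and is the only substantive point where you go beyond the paper rather than merely elaborating it. Under the paper's own convention, $w(a,b)$ is a factor of length $b-a+1$, so $S_t(0,n-2)$ has length $n-1$, not $n-2$; this does not match the central word of length $n-2$ you want to insert between $A$ and $B$, and the resulting $u$ would have length $n+1$. The paper does not address this, and its one-line proof gives no indication of which normalization of Smith's recursion (the raw $[\mu_1,\dots,\mu_t]$ or the shifted $[\mu_1-1,\dots,\mu_t]$ from the preceding proposition) is intended for $S_t$ here. Flagging this is appropriate; resolving it is a matter of fixing indices, not of a missing idea.
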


\begin{proof}
    Use \cite[Lemma~7.2]{combinatoricsonwords}.
\end{proof}

This shows that the algorithm outputs an admissible configuration when $n$ and $k$ are coprime.
If not, namely, $r_i > 1$, by construction, we have $r_i$ sections with $n' = n/r_i$ elements each, and containing $k' = k/r_i$ letters $A$.
Fix a subset $O$ with $m$ consecutive letters, and write $m = cn' + m'$ for some integer $c$.
By assumption the set $O$ contains at most $ck' + \left \lceil\frac{m'}{n/k} \right\rceil$ and at least $ck' + \left \lfloor\frac{m'}{n/k} \right \rfloor$ letters $A$.
These numbers coincide with $\left \lceil \frac{m}{n/k} \right \rceil$ and $\left \lfloor \frac{m}{n/k} \right \rfloor$.

\printbibliography

@book {automaticseq,
    AUTHOR = {Allouche, Jean-Paul and Shallit, Jeffrey},
     TITLE = {Automatic sequences},
      NOTE = {Theory, applications, generalizations},
 PUBLISHER = {Cambridge University Press, Cambridge},
      YEAR = {2003},
     PAGES = {xvi+571},
      ISBN = {0-521-82332-3},
   MRCLASS = {11B85 (11Z05 37A45 37B10 68Q45 68R15 94A45)},
  MRNUMBER = {1997038},
MRREVIEWER = {Val\'erie\ Berth\'e},
       DOI = {10.1017/CBO9780511546563},
       URL = {https://doi.org/10.1017/CBO9780511546563},
}

@article {sturmianwords,
    AUTHOR = {Berstel, Jean and de Luca, Aldo},
     TITLE = {Sturmian words, {L}yndon words and trees},
   JOURNAL = {Theoret. Comput. Sci.},
  FJOURNAL = {Theoretical Computer Science},
    VOLUME = {178},
      YEAR = {1997},
    NUMBER = {1-2},
     PAGES = {171--203},
      ISSN = {0304-3975,1879-2294},
   MRCLASS = {68R15},
  MRNUMBER = {1453849},
MRREVIEWER = {V\'eronique\ Bruy\`ere},
       DOI = {10.1016/S0304-3975(96)00101-6},
       URL = {https://doi.org/10.1016/S0304-3975(96)00101-6},
}

@book {combinatoricsonwords,
    AUTHOR = {Berstel, Jean and Lauve, Aaron and Reutenauer, Christophe and
              Saliola, Franco V.},
     TITLE = {Combinatorics on words},
    SERIES = {CRM Monograph Series},
    VOLUME = {27},
      NOTE = {Christoffel words and repetitions in words},
 PUBLISHER = {American Mathematical Society, Providence, RI},
      YEAR = {2009},
     PAGES = {xii+147},
      ISBN = {978-0-8218-4480-9},
   MRCLASS = {68R15 (11J70 37B10 68-02)},
  MRNUMBER = {2464862},
MRREVIEWER = {Boris\ Adamczewski},
       DOI = {10.1090/crmm/027},
       URL = {https://doi.org/10.1090/crmm/027},
}

@book {algcombonwords,
    AUTHOR = {Lothaire, M.},
     TITLE = {Algebraic combinatorics on words},
    SERIES = {Encyclopedia of Mathematics and its Applications},
    VOLUME = {90},
      NOTE = {A collective work by Jean Berstel, Dominique Perrin, Patrice
              Seebold, Julien Cassaigne, Aldo De Luca, Steffano Varricchio,
              Alain Lascoux, Bernard Leclerc, Jean-Yves Thibon, Veronique
              Bruyere, Christiane Frougny, Filippo Mignosi, Antonio Restivo,
              Christophe Reutenauer, Dominique Foata, Guo-Niu Han, Jacques
              Desarmenien, Volker Diekert, Tero Harju, Juhani Karhumaki and
              Wojciech Plandowski,
              With a preface by Berstel and Perrin},
 PUBLISHER = {Cambridge University Press, Cambridge},
      YEAR = {2002},
     PAGES = {xiv+504},
      ISBN = {0-521-81220-8},
   MRCLASS = {68R15 (05A99 05E10 20M35)},
  MRNUMBER = {1905123},
MRREVIEWER = {Patrice\ S\'e\'ebold},
       DOI = {10.1017/CBO9781107326019},
       URL = {https://doi.org/10.1017/CBO9781107326019},
}

@book {appcombonwords,
    AUTHOR = {Lothaire, M.},
     TITLE = {Applied combinatorics on words},
    SERIES = {Encyclopedia of Mathematics and its Applications},
    VOLUME = {105},
      NOTE = {A collective work by Jean Berstel, Dominique Perrin, Maxime
              Crochemore, Eric Laporte, Mehryar Mohri, Nadia Pisanti,
              Marie-France Sagot, Gesine Reinert, Sophie Schbath, Michael
              Waterman, Philippe Jacquet, Wojciech Szpankowski, Dominique
              Poulalhon, Gilles Schaeffer, Roman Kolpakov, Gregory
              Koucherov, Jean-Paul Allouche and Val\'erie Berth\'e,
              With a preface by Berstel and Perrin},
 PUBLISHER = {Cambridge University Press, Cambridge},
      YEAR = {2005},
     PAGES = {xvi+610},
      ISBN = {978-0-521-84802-2},
   MRCLASS = {68R15 (05A05)},
  MRNUMBER = {2165687},
MRREVIEWER = {Ricardo\ Baeza-Yates},
       DOI = {10.1017/CBO9781107341005},
       URL = {https://doi.org/10.1017/CBO9781107341005},
}

@book {discrepancy,
    AUTHOR = {Matou\v sek, Ji\v r\'i},
     TITLE = {Geometric discrepancy},
    SERIES = {Algorithms and Combinatorics},
    VOLUME = {18},
      NOTE = {An illustrated guide,
              Revised paperback reprint of the 1999 original},
 PUBLISHER = {Springer-Verlag, Berlin},
      YEAR = {2010},
     PAGES = {xiv+296},
      ISBN = {978-3-642-03941-6},
   MRCLASS = {11K38 (05D05 52C99 65D18)},
  MRNUMBER = {2683232},
       DOI = {10.1007/978-3-642-03942-3},
       URL = {https://doi.org/10.1007/978-3-642-03942-3},
}

@article{smith:continuedfractions,
  author = {Henry J. Stephen Smith},
  title = {Note on Continued Fractions},
  publisher = {Macmillan and co},
  JOURNAL = {Messenger Math.},
  FJOURNAL = {The Messenger of Mathematics},
  year = {1876},
  url = {https://personal.math.ubc.ca/~cass/courses/m446-05/smith.pdf},
  % Add other relevant fields like publisher, etc. if available
}

@article {spencer:six,
    AUTHOR = {Spencer, Joel},
     TITLE = {Six standard deviations suffice},
   JOURNAL = {Trans. Amer. Math. Soc.},
  FJOURNAL = {Transactions of the American Mathematical Society},
    VOLUME = {289},
      YEAR = {1985},
    NUMBER = {2},
     PAGES = {679--706},
      ISSN = {0002-9947,1088-6850},
   MRCLASS = {05A05},
  MRNUMBER = {784009},
       DOI = {10.2307/2000258},
       URL = {https://doi.org/10.2307/2000258},
}

@article {vuillon:balancedwords,
    AUTHOR = {Vuillon, Laurent},
     TITLE = {Balanced words},
   JOURNAL = {Bull. Belg. Math. Soc. Simon Stevin},
  FJOURNAL = {Bulletin of the Belgian Mathematical Society. Simon Stevin},
    VOLUME = {10},
      YEAR = {2003},
     PAGES = {787--805},
      ISSN = {1370-1444,2034-1970},
   MRCLASS = {68R15 (05A05 37B10)},
  MRNUMBER = {2073026},
       URL = {http://projecteuclid.org/euclid.bbms/1074791332},
}

@misc{wehlou,
    author = {Hanna Uscka-Wehlou},
    title = {Sturmian words with balanced construction},
    URL = {https://www.diva-portal.org/smash/get/diva2:228465/FULLTEXT01.pdf}
}
\end{document}